\newtheorem{theorem}{Theorem}
\newtheorem{lemma}{Lemma}
\author{Ondrej Slu\v{c}iak\thanks{This work was funded by the Austrian Science Fund (FWF) in Project
NFN SISE S10611.}}
\date{\vspace*{-0.3cm}\small(\texttt{osluciak@nt.tuwien.ac.at})\vspace*{-0.4cm}}
\title{On the roots of $a^x+a^{-x}=x$}
\newcommand{\arcsinh}{\mathrm{arcsinh}}
\begin{document}
\maketitle

\begin{abstract}
We provide an analytical closed-form solution of the exponential equation\break $a^x+a^{-x}=x$ for a specific value $a$, discuss the number of roots in general case, and provide bounds on these roots.
\end{abstract}

\section{Introduction}

It is well-known that the solution of equations of type $a^x=x$ is given by the so-called Lambert $W$ function\cite{Corless96}, defined as the inverse function of $f(x)=xe^x$, i.e.,\break $W(x)=f^{-1}(x)$. 
Thus, in case of $a^x=x$ the solution is then $x=-\frac{W(-\ln a)}{\ln(a)}$. Although this function has been known since 18th century, its usefulness and applications started to be apparent only in few last decades\cite{Corless96,Valluri00,Jenn02,Scott06,Chatzigeorgiou13,Yi11,Ban09,Hwang05,ChapeauBlondeau02}. 

Nevertheless, its applicability is limited only to few cases and therefore, already in slightly more complicated scenarios, we have to take a different approach. 
In what follows, we analyze the solutions of equation
\begin{equation}
a^x+a^{-x}=x\label{eq:main}
\end{equation}
where $a\in\mathbb{R}^+_0, x\in\mathbb{R}$.

%\section[Solution of a\textsuperscript{x}+a\textsuperscript{-x}=x]{Solution of $a^x+a^{-x}=x$}
\section{Main result}

First of all, let us start with very simple options.
\begin{enumerate}
\item In case $a=1$, there is exactly one solution to Eq.~\eqref{eq:main}, $x=2$.
\item In case $a=0$, there is exactly one solution to Eq.~\eqref{eq:main}, $x=0$.
\end{enumerate}

\noindent Let us move directly to the main result.
\begin{theorem}\label{theorem}
In case $a\in\mathbb{R}^+\backslash\{1\}$, there are three possible cases to Eq.~\eqref{eq:main}
\begin{enumerate}
\item If $a\in (0,e^{-\frac{1}{2\sinh q}}) \cup (e^{\frac{1}{2\sinh q}},\infty)$ there exists no solution $x$.
\item If $a = e^{-\frac{1}{2\sinh q}}$ or $a = e^{\frac{1}{2\sinh q}}$, there exists exactly one solution, $x^\dagger = 2\cosh q$.
\item If $a\in (e^{-\frac{1}{2\sinh q}}, e^{\frac{1}{2\sinh q}})$, there exist exactly two solutions.
\end{enumerate}
Constant $q$ is a solution to equation
\begin{equation*}
\coth q = q
\end{equation*}
and is approximately $q\approx 1.19967864\dots$.
\end{theorem}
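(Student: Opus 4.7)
The plan is to reduce \eqref{eq:main} to the analysis of a single convex function of $x$. First I would write $a^x+a^{-x}=2\cosh(bx)$ with $b=\ln a\neq 0$, and note that both sides are invariant under $b\mapsto -b$, so it suffices to treat $b>0$ (i.e.\ $a>1$) and recover the $0<a<1$ case via the symmetry $a\leftrightarrow 1/a$. Any solution must satisfy $x\geq 2$, since $2\cosh(bx)\geq 2$.

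Next I would set $f(x):=2\cosh(bx)-x$ and exploit its strict convexity ($f''(x)=2b^2\cosh(bx)>0$). Because $f(0)=2$, $f'(0)=-1<0$, and $f(x)\to\infty$ as $x\to\infty$, the minimum of $f$ is attained at a unique point $x^*>0$ satisfying $\sinh(bx^*)=\tfrac{1}{2b}$, and the number of real roots of $f$ is $0$, $1$, or $2$ according to whether $f(x^*)$ is positive, zero, or negative. The heart of the argument is therefore to track the sign of $f(x^*)$ as $b$ varies.

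For this I would substitute $u:=bx^*$, which by the critical-point equation gives $b=\tfrac{1}{2\sinh u}$ and
\[
f(x^*)=2\cosh u-2u\sinh u=2\sinh u\,(\coth u-u).
\]
Since $\sinh u>0$, the sign of $f(x^*)$ equals the sign of $g(u):=\coth u-u$. The function $g$ is strictly decreasing, as $g'(u)=-\operatorname{csch}^2 u-1<0$, and ranges from $+\infty$ at $0^+$ to $-\infty$ at $+\infty$, so it has a unique positive root $q$. Because $u=\arcsinh(1/(2b))$ is itself strictly decreasing in $b$, the composition $b\mapsto g(u(b))$ is strictly increasing, giving $f(x^*)<0$, $=0$, or $>0$ exactly when $b<b_0$, $=b_0$, or $>b_0$, where $b_0:=\tfrac{1}{2\sinh q}$.

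Finally, at $b=b_0$ the double root coincides with the critical point, so $x^\dagger=x^*=q/b_0=2q\sinh q=2\cosh q$, using $\coth q=q$. Translating $b=\ln a=\pm b_0$ and invoking $a\leftrightarrow 1/a$ symmetry yields the three cases of the theorem. I do not foresee any serious obstacle; the decisive move is the substitution $u=bx^*$, which collapses the two simultaneous tangency constraints ``$x^*$ is critical'' and ``$f(x^*)=0$'' into the single transcendental equation $\coth q=q$ and thereby explains the appearance of $q$ in the statement.
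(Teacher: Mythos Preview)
Your proposal is correct and follows essentially the same route as the paper: rewrite $a^x+a^{-x}$ as $2\cosh(bx)$ with $b=\ln a$, exploit strict convexity of $f(x)=2\cosh(bx)-x$ to reduce the root count to the sign of $f$ at its unique minimizer, and then perform the substitution $u=bx^\star$ (equivalently $q=\arcsinh\frac{1}{2\ln a}$) to obtain the threshold equation $\coth q=q$. Your write-up is in fact tighter than the paper's in one respect: you make the monotonicity of $b\mapsto g(u(b))$ explicit, which cleanly justifies the trichotomy, whereas the paper leaves this step largely to a picture and the phrase ``converse holds true.''
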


Constant $q$ can be found in literature in relation to so-called Laplace limit\cite{Finch03}.
Numerically, the minimum and maximum $a$ for which there exists at least one solution is $a=e^{-\frac{1}{2\sinh q}}\approx 0.71793825$ and $a=e^{\frac{1}{2\sinh q}}\approx 1.39287744$, with $x^\dagger=2\cosh q\approx 3.62034$.

\begin{figure}[t]\centering\vspace{-0.1cm}
\psfrag{a}[Bc][Bc][1][0]{$-10$}
\psfrag{b}[Bc][Bc][1][0]{$-8$}
\psfrag{c}[Bc][Bc][1][0]{$-6$}
\psfrag{d}[Bc][Bc][1][0]{$-4$}
\psfrag{e}[Bc][Bc][1][0]{$-2$}
\psfrag{f}[Bc][Bc][1][0]{$0$}
\psfrag{g}[Bc][Bc][1][0]{$2$}
\psfrag{h}[Bc][Bc][1][0]{$4$}
\psfrag{i}[Bc][Bc][1][0]{$6$}
\psfrag{j}[Bc][Bc][1][0]{$8$}
\psfrag{k}[Bc][Bc][1][0]{$10$}
\psfrag{A}[Br][Br][1][0]{$-4$}
\psfrag{B}[Br][Br][1][0]{$-2$}
\psfrag{C}[Br][Br][1][0]{$0$}
\psfrag{D}[Br][Br][1][0]{$2$}
\psfrag{E}[Br][Br][1][0]{$4$}
\psfrag{F}[Br][Br][1][0]{$6$}
\psfrag{G}[Br][Br][1][0]{$8$}
\psfrag{H}[Br][Br][1][0]{$10$}
\psfrag{R}[Bl][Bl][1][0]{$a>a^\star$}
\psfrag{S}[Bl][Bl][1][0]{$a<a^\star$}
\psfrag{y}{}
\psfrag{x}[Bc][Bc][1][0]{$x$}
\includegraphics[width=9.5cm]{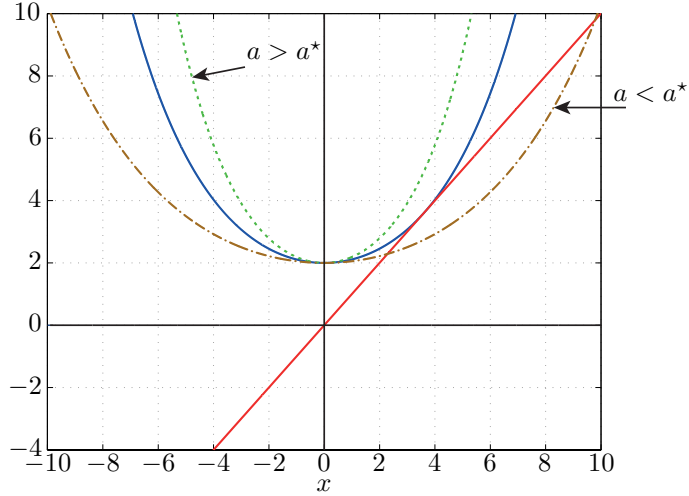}
\captionsetup{skip=0.1cm}
\caption{Intersection (roots) of $f_1(x)=2\coth(x\ln a)$ and $f_2(x)=x$ (red line). If $a=a^\star=e^{\frac{1}{2\sinh q}}$ (blue curve) there is one root $x=2\cosh q\approx 3.62034$.}\label{fig}
\vspace{-0.2cm}
\end{figure}
\begin{proof}
Let us define function $f(x)$
\begin{equation}\label{eq:mainf}
f(x) = a^x+a^{-x}-x,
\end{equation}
which can be equivalently written as
\begin{equation}\label{eq:main2}
f(x) = 2\cosh (x\ln a) -x.
\end{equation}
%To find a root of $f(x)$, we are interested in such $x=2\cosh (x\ln a)$. 

By minimizing, we obtain
\begin{equation*}
f^\prime(x) = 2\ln a\sinh(x\ln a)-1.
\end{equation*}
Setting $f^\prime(x)=0$, we find $x^\star = \frac{1}{\ln a}\arcsinh\frac{1}{2\ln a}$. Since $f^{\prime\prime}(x)>0, \forall x$ ($f(x)$ is convex), we see that $x^\star$ is a minimum.

Plugging $x^\star$ into Eq.~\eqref{eq:main2} we observe that
\begin{equation*}
f(x) = 2\cosh\left(\arcsinh\frac{1}{2\ln a}\right)-\frac{1}{\ln a}\arcsinh\frac{1}{2\ln a},
\end{equation*}
and setting $f(x)=0$ and substituing $q=\arcsinh\frac{1}{2 \ln a}$ we obtain
\[
\frac{\cosh q}{\sinh q} = q.
\]
From literature\cite{Finch03} we know that $q\approx  1.19967864$ and $q\approx  -1.19967864$.
Thus, the minimum is obtained when $a = e^{-\frac{1}{2\sinh q}}$ or $a = e^{\frac{1}{2\sinh q}}$ and takes the value of $x=2\cosh q$.

Furthermore, if $1<a<e^{\frac{1}{2\sinh q}}$, then $x^\star$ gets larger\footnote{Since $\cosh(-x)=\cosh(x)$, the same holds also for $e^{-\frac{1}{2\sinh q}}<a<1$.}, thus from the convexity of function of $f(x)$ and mean value theorem we know that $f_1(x)=2\cosh(x \ln a)$ intersects $f_2(x)=x$ in two points (see Fig.~\ref{fig}).

Converse holds true for all other $a\in(0,e^{-\frac{1}{2\sinh q}}) \cup (e^{\frac{1}{2\sinh q}},\infty)$.
\end{proof}%\pagebreak

\vspace{0.4cm}
\textbf{Remark:}
Note that if we substitute in Eq.~\eqref{eq:main2} $y=x\ln a$, then $f(y)=2\cosh(y)-\frac{1}{\ln a}y$. Thus, in similar fashion as in Fig.~\ref{fig} we would fix function $2\cosh(y)$ and investigate the slope of $\frac{1}{\ln a}y$. In such a way it is more visible that in the non-trivial case if $a\to 0$, line $\frac{1}{\ln a}y$ becomes vertical, and thus there is again only one intersection point with $\cosh(y)$ at $x=0$.

\vspace{0.3cm}
As a consequence of Theorem~\ref{theorem}, we have the following lemmas.

\begin{figure}[t!]\centering
\psfrag{a}[Bc][Bc][1][0]{$0$}
\psfrag{b}[Bc][Bc][1][0]{$2$}
\psfrag{c}[Bc][Bc][1][0]{$4$}
\psfrag{d}[Bc][Bc][1][0]{$6$}
\psfrag{e}[Bc][Bc][1][0]{$8$}
\psfrag{f}[Bc][Bc][1][0]{$10$}
\psfrag{g}[Bc][Bc][1][0]{$12$}
\psfrag{h}[Bc][Bc][1][0]{$14$}
\psfrag{A}[Br][Br][1][0]{$-4$}
\psfrag{B}[Br][Br][1][0]{$-2$}
\psfrag{C}[Br][Br][1][0]{$0$}
\psfrag{D}[Br][Br][1][0]{$2$}
\psfrag{E}[Br][Br][1][0]{$4$}
\psfrag{F}[Br][Br][1][0]{$6$}
\psfrag{G}[Br][Br][1][0]{$8$}
\psfrag{H}[Br][Br][1][0]{$10$}
\psfrag{y}[Bc][Bc][1][0]{$f(x)$}
\psfrag{x}[Bc][Bc][1][0]{$x$}
\includegraphics[width=9.5cm]{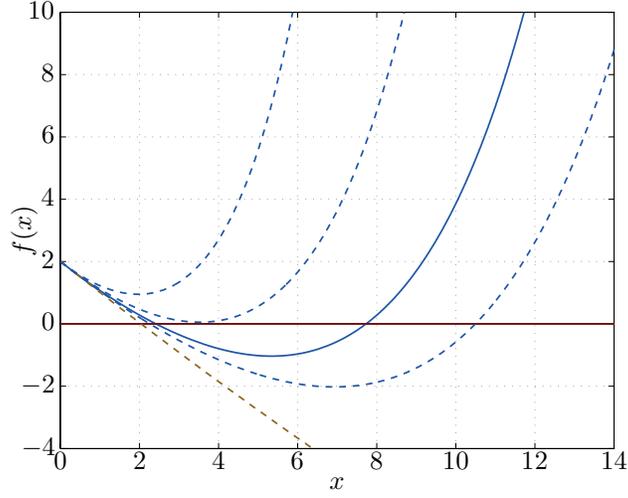}
\captionsetup{skip=0.1cm}
\caption{Function $f(x)$ (Eq.~\eqref{eq:main2}) for various $a$. Observe that root $x_1>2$ for all $a$, and that minimum $x^\star$ is closer to $x_2$ than to $x_1$.}\label{fig2}
\end{figure}
\begin{lemma}
It holds that $f(x)\geq 2-x$ ($\forall x$)(cf. Eq.~\eqref{eq:main2}).\label{lem1}
\end{lemma}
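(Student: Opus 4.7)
The plan is to reduce the claimed inequality to the elementary bound $\cosh(y)\geq 1$. Using the form of $f(x)$ in Eq.~\eqref{eq:main2}, the inequality $f(x)\geq 2-x$ is equivalent to
\[
2\cosh(x\ln a) - x \;\geq\; 2 - x,
\]
and the $-x$ on either side cancels, leaving $\cosh(x\ln a)\geq 1$.

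The next step is to justify $\cosh(y)\geq 1$ for every $y\in\mathbb{R}$. This is standard and can be seen in at least two equally short ways: (i) from the definition, $\cosh(y)=\tfrac{1}{2}(e^y+e^{-y})\geq \sqrt{e^y\cdot e^{-y}}=1$ by AM--GM, with equality iff $e^y=e^{-y}$, i.e., $y=0$; or (ii) from the Taylor expansion $\cosh(y)=\sum_{k\geq 0}\tfrac{y^{2k}}{(2k)!}\geq 1$, again with equality only at $y=0$.

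Applying this with $y=x\ln a$ (which is well defined since $a>0$ in the regime of Theorem~\ref{theorem}) gives $\cosh(x\ln a)\geq 1$, hence $2\cosh(x\ln a)\geq 2$, and adding $-x$ to both sides recovers $f(x)\geq 2-x$.

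There is essentially no obstacle here; the only thing worth remarking on is the equality case. Equality $\cosh(x\ln a)=1$ holds precisely when $x\ln a = 0$, i.e., when $x=0$ or $a=1$. The lemma is therefore a strict inequality except on this negligible set, which is consistent with the observation, highlighted in Fig.~\ref{fig2}, that $x_1>2$: the line $2-x$ hits the value $0$ at $x=2$, while by strict convexity and the strict inequality $f(x)>2-x$ for $x\neq 0$ (when $a\neq 1$), the larger root $x_1$ of $f$ must lie strictly to the right of $2$.
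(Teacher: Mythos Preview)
Your proof is correct and takes essentially the same approach as the paper: both reduce the inequality to $a^x+a^{-x}\geq 2$ (equivalently $\cosh(x\ln a)\geq 1$), with the paper simply asserting that $a=1$ minimizes $a^x+a^{-x}$ while you supply the elementary justification via AM--GM. One small slip in your closing remark: in the paper's convention $x_1<x_2$, so $x_1$ is the \emph{smaller} root, not the larger; your conclusion $x_1>2$ is still correct.
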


\begin{proof}
It is straightforward that if $a=1$, then $f(x)=a^x+a^{-x}-x=2-x$ is the minimal solution. Thus, $f(x)\geq 2-x$ (see Fig.~\ref{fig2}). 
\end{proof}

\vspace{0cm}
\begin{lemma}\label{lem2}
If $a\in (e^{-\frac{1}{2\sinh q}}, e^{\frac{1}{2\sinh q}})$ there exist exactly two roots $x_1$ and $x_2$ of Eq.~\eqref{eq:mainf} which are both positive, and lie in the intervals $x_1\in(2,2\cosh q)\approx(2,3.62034)$, and, $x_2\in(\frac{1}{\ln a}\arcsinh\frac{1}{2\ln a},\frac{2}{\ln a}\arcsinh\frac{1}{2\ln a}-2)$.\\[-0.2cm]

More precisely, if we know $x_1$, then $x_2\in(\frac{3}{2}\frac{1}{\ln a}\arcsinh\frac{1}{2\ln a}-\frac{1}{2}x_1,\frac{2}{\ln a}\arcsinh\frac{1}{2\ln a}-x_1)$.
\end{lemma}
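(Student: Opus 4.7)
The plan is to combine the strict version of Lemma~\ref{lem1} with the convexity of $f(x) = 2\cosh(x\ln a) - x$ from Theorem~\ref{theorem}, and to exploit the asymmetry of $f''(x) = 2(\ln a)^2\cosh(x\ln a)$, which is an even function of $x$ that is strictly increasing for $x>0$. For $a\neq 1$, Lemma~\ref{lem1} gives $f(x) > 2-x \geq 0$ whenever $x\leq 2$, ruling out non-positive roots and forcing $x_1 > 2$. For the upper endpoint $x_1 < 2\cosh q$, I would evaluate
\[
f(2\cosh q) = 2\cosh(2\cosh q\cdot\ln a) - 2\cosh q
\]
and use the identity $\coth q = q$: inside the open interval $|\ln a| < 1/(2\sinh q)$, so $|2\cosh q\cdot\ln a| < \coth q = q$, hence $\cosh(2\cosh q\cdot \ln a) < \cosh q$ and $f(2\cosh q) < 0$. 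Convexity of $f$ then sandwiches $x_1 < 2\cosh q < x_2$. Positivity of $x^\star$ follows since $1/\ln a$ and $\arcsinh(1/(2\ln a))$ carry the same sign, and $x_2 > x^\star$ because $x^\star$ is the unique minimum of $f$ with $f(x^\star) < 0 = f(x_2)$; these observations cover the left endpoint of the stated interval for $x_2$ and confirm positivity of both roots.

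For the upper bounds on $x_2$, set $d_1 := x^\star - x_1$ and $d_2 := x_2 - x^\star$, and apply a Lagrange-form Taylor expansion of $f$ about $x^\star$,
\[
0 = f(x_i) = f(x^\star) + \tfrac{1}{2} f''(\xi_i)(x_i-x^\star)^2, \qquad i=1,2,
\]
with $\xi_1\in(x_1,x^\star)$ and $\xi_2\in(x^\star,x_2)$, yielding $f''(\xi_1)d_1^2 = f''(\xi_2)d_2^2$. Since $0 < \xi_1 < \xi_2$ and $f''$ is strictly increasing on $(0,\infty)$, we have $f''(\xi_1) < f''(\xi_2)$, so $d_2 < d_1$, equivalently $x_2 < 2x^\star - x_1$. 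Combined with $x_1 > 2$ this yields the coarser $x_2 < 2x^\star - 2$ in the statement.

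The refined lower bound $x_2 > \tfrac{3}{2}x^\star - \tfrac{1}{2}x_1$ is equivalent to $d_2 > d_1/2$. The natural approach is to show $f(x^\star + d_1/2) < 0$; via the same Taylor expansion this reduces to an inequality of the form $f''(\eta) < 4\,f''(\xi_1)$ with $\eta\in(x^\star, x^\star + d_1/2)$, i.e., controlling the growth of $\cosh(x\ln a)$ past $x^\star$ relative to its values on $(x_1, x^\star)$. I expect this growth bound to be the main obstacle of the proof, since the three points $x_1, x^\star, x_2$ can spread arbitrarily far apart as $a\to 1$, and naive monotonicity of $f''$ alone does not suffice; a promising refinement is to exploit the root identity $f''(x_i) = (\ln a)^2 x_i$ (obtained from $2\cosh(x_i\ln a) = x_i$), which sharpens the ratio $d_2/d_1$ in terms of $x_1$ and $x_2$ themselves.
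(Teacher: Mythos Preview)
Your overall strategy coincides with the paper's: bound $x_1$ below via Lemma~\ref{lem1}, bound $x_1$ above by the tangency value $2\cosh q$, bound $x_2$ below by the minimizer $x^\star$, and bound $x_2$ above by the asymmetry inequality $d_2 < d_1$ (in your notation). Your execution is in fact more careful than the paper's. For the upper bound on $x_1$ the paper simply says ``if $x_1=2\cosh q$ then $x_1$ is the simple root,'' whereas your direct computation $|2\cosh q\cdot\ln a|<\coth q=q$ giving $f(2\cosh q)<0$ is a genuine argument. For $d_2<d_1$ the paper just asserts that ``$f(x)$ grows faster for $x>x^\star$ than for $x<x^\star$''; your Taylor identity $f''(\xi_1)d_1^2=f''(\xi_2)d_2^2$ together with the strict monotonicity of $f''$ on $(0,\infty)$ makes this rigorous.

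Regarding the refined lower bound $d_2>d_1/2$: you are right to flag this as the crux, and your reduction to $f''(\eta)<4f''(\xi_1)$ is the natural formulation. Be aware, however, that the paper itself does not prove this inequality either---it merely writes ``Since $2(x_2-x^\star)>x^\star-x_1$'' and moves on, with no justification beyond the same informal growth heuristic. So on this point you have already matched the paper's level of argument and correctly diagnosed where a real proof would require additional work; your suggestion to exploit $f''(x_i)=(\ln a)^2 x_i$ at the roots is a reasonable starting point that goes beyond what the paper offers.
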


\begin{proof}Let $x_1<x_2$.
The lower bound on $x_1$ follows from Lemma~\ref{lem1}. The upper bound from the fact that if $x_1=2\cosh q$ then $x_1$ is the simple root.

The lower bound on $x_2$ follows from convexity of the function and the fact that $x_2$ must lie above the optimal value $x^\star = \frac{1}{\ln a}\arcsinh\frac{1}{2\ln a}$. The upper bound is due to the fact that the function $f(x)$ grows faster for $x>x^\star$ than for $x<x^\star$, i.e., $x^\star-2>x_2-x^\star$. Thus, $x_2<2x^\star -2$. 

In case we know $x_1$, the more accurate bounds on $x_2$ follows similarly. Since\break $2(x_2-x^\star)>x^\star -x_1$, the lower bound is clearly $x_2>\frac{3}{2}x^\star-\frac{1}{2}x_1$. The upper bound comes from fact that $x_2-x^\star<x^\star-x_1$.
%Proof follows directly from the mean-value theorem (see Fig.~\ref{fig}).
\end{proof}

%\pagebreak
Another insight on the solution space can be viewed on Fig.~\ref{fig:solspace}. Observe the minimum at $a=1, x=2$ (not considering case $a=0, x=0$) and the slight asymmetricity of the curve around the line $a=1$.

\begin{figure}\centering
\psfrag{a}[Bl][Bl][1][0]{$0.7$}
\psfrag{b}[Bc][Bc][1][0]{$0.8$}
\psfrag{c}[Bc][Bc][1][0]{$0.9$}
\psfrag{d}[Bc][Bc][1][0]{$1$}
\psfrag{e}[Bc][Bc][1][0]{$1.1$}
\psfrag{f}[Bc][Bc][1][0]{$1.2$}
\psfrag{g}[Bc][Bc][1][0]{$1.3$}
\psfrag{h}[Bc][Bc][1][0]{$1.4$}
\psfrag{A}[Br][Br][1][0]{$0$}
\psfrag{B}[Br][Br][1][0]{$5$}
\psfrag{C}[Br][Br][1][0]{$10$}
\psfrag{D}[Br][Br][1][0]{$15$}
\psfrag{E}[Br][Br][1][0]{$20$}
\psfrag{F}[Br][Br][1][0]{$25$}
\psfrag{G}[Br][Br][1][0]{$30$}
\psfrag{H}[Br][Br][1][0]{$35$}
\psfrag{I}[Br][Br][1][0]{$40$}
\psfrag{x}[Bc][Bc][1][0]{$a$}%\raisebox{-0.2cm}{$a$}}
\psfrag{y}[Bc][Bc][1][0]{\raisebox{0.3cm}{$x$}}
\psfrag{R}[Bc][Bc][1][0]{I.}
\psfrag{Q}[Bc][Bc][1][0]{II.}
\includegraphics[width=9.5cm]{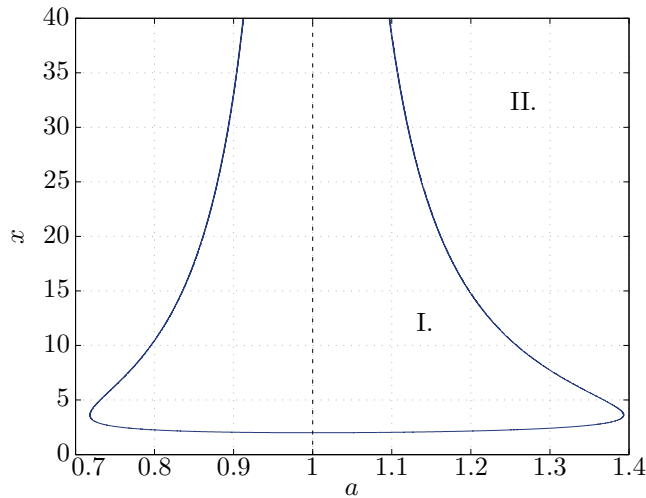}
\captionsetup{skip=0.1cm}
\caption{Solution space of Eq.~\eqref{eq:mainf}.\! Region~I. depicts $f(x)\!<\!0$, Region~II. depicts $f(x)\!>\!0$. Additionally, there exists an isolated point at $a=0$, $x=0$ (not shown).}\label{fig:solspace}
\vspace{-0.1cm}
\end{figure}

\subsection{Simulation results}

As shown in Tab.~\ref{tab}, in general, the bounds are reasonable as long as $a\in (e^{-\frac{1}{2\sinh q}}, e^{\frac{1}{2\sinh q}})$, i.e., $a\in(0.71793825, 1.39287744)$. Also it is obvious that if we know $x_1$ then the bounds on $x_2$ are tighter (``2nd bounds'') than if we do not know it (``1st bounds'').

\begin{table}[h!]\centering
\begin{tabular}{lll|p{2cm}p{2cm}|p{2cm}p{2cm}l}
\hline
$a$ & $x_1$ & $x_2$ & \centering 1st lower bound on $x_2$ & \centering 1st upper bound on $x_2$ & \centering 2nd lower bound on $x_2$ & \centering 2nd upper bound on $x_2$&\hspace*{-0.5cm}\\
\hline
$0.6$ & -- & -- & $1.6959$ & $1.3918$ & -- & --&\hspace*{-0.5cm}\\
$0.75$ & $2.5738$ & $6.3160$ & $4.5882$ & $7.1764$ & $5.5954$ & $6.6026$&\hspace*{-0.5cm}\\
$0.9$ & $2.0467$ & $33.2488$ & $21.4624$ & $40.9248$ & $31.1702$ & $40.8781$&\hspace*{-0.5cm}\\
$1.08$ & $2.0243$&  $51.1120$ & $33.3978$ & $64.7955$ & $49.0845$ & $64.7712$&\hspace*{-0.5cm}\\
$1.39$ & $3.3144$ & $3.9932$ & $3.6589$ & $5.3179$ & $3.8312$ & $4.0035$&\hspace*{-0.5cm}\\
\hline
\end{tabular}
\caption{Simulations results for various $a$ and bounds on $x_2$. It is obvious that if there are two roots then the bounds from Lemma~\ref{lem2} hold. Notice that as $a\to 1$ the bounds get looser. As $a\to e^{1/2\sinh(q)}\approx 1.3928$ or $a\to e^{-1/2\sinh(q)}\approx 0.7179$ the bounds are tighter.}\label{tab}
\end{table}

\vspace{-0.2cm}
\section{Conclusions}

We analyzed the solutions of equation $a^x+a^{-x}=x$, which, to our knowledge, has not been mentioned in the literature. We gave strict conditions on $a$ as well as on the roots. We showed that there are always at most two solutions. For these solutions we gave bounds which depend on $a$. These bounds may be very useful for numerical enumeration of the roots, when used as intialization values.

Nevertheless, the question if there is an analytical solution on $x_1$ and $x_2$ remains unsolved.

%\renewcommand{\abstractname}{Acknowledgements}
%\begin{abstract}
\subsection*{Acknowledgements}
The author would like to thank Claude F. Leibovici for all the valuable inputs.
%\end{abstract}

\bibliographystyle{unsrt}
\bibliography{references}

\begin{thebibliography}{10}

\bibitem{Corless96}
R.~M. Corless, G.~H. Gonnet, D.~E.~G. Hare, D.~J. Jeffrey, and D.~E. Knuth.
\newblock {On the Lambert W Function}.
\newblock {\em Advances in Computational Mathematics}, 5:329--359, 1996.

\bibitem{Valluri00}
S.~R. Valluri, D.~J. Jeffrey, and R.~M. Corless.
\newblock {Some Applications of the Lambert W Function to Physics}.
\newblock {\em Canadian Journal of Physics}, 78:823--831, 2000.

\bibitem{Jenn02}
D.~C. Jenn.
\newblock {Applications of the Lambert W Function in Electromagnetics}.
\newblock {\em IEEE Antennas and Propagation Magazine}, 44(3):139--142, 2002.

\bibitem{Scott06}
T.~C. Scott, R.~Mann, and R.~E.~Martinez II.
\newblock {General relativity and quantum mechanics: Towards a generalization
  of the Lambert W function}.
\newblock {\em Applicable Algebra in Engineering, Communication and Computing},
  17(1):41--47, 2006.

\bibitem{Chatzigeorgiou13}
I.~Chatzigeorgiou.
\newblock {Bounds on the Lambert Function and Their Application to the Outage
  Analysis of User Cooperation}.
\newblock {\em IEEE Communications Letters}, 17(8):1505--1508, 2013.

\bibitem{Yi11}
S.~Yi, S.~Yu, and J.~H. Kim.
\newblock {Analysis of Neural Networks with Time-Delays Using the Lambert W
  Function}.
\newblock In {\em Proc. of American Control Conference}, pages 3221--3226,
  2011.

\bibitem{Ban09}
T.~W. Ban, W.~Choi, B.~C. Jung, and D.~K. Sung.
\newblock {Multi-User Diversity in a Spectrum Sharing System}.
\newblock {\em IEEE Transactions on Wireless Communications}, 8(1):102--106,
  2009.

\bibitem{Hwang05}
C.~Hwang and Y.-C. Cheng.
\newblock {Use of Lambert W Function to Stability Analysis of Time-Delay
  Systems}.
\newblock In {\em Proc. of American Control Conference}, pages 4283--4288,
  2005.

\bibitem{ChapeauBlondeau02}
F.~Chapeau-Blondeau and A.~Monir.
\newblock {Numerical Evaluation of the Lambert W Function and Application to
  Generation of Generalized Gaussian Noise With Exponent 1/2}.
\newblock {\em IEEE Transactions on Signal Processing}, 50(9):2160--2165, 2002.

\bibitem{Finch03}
S.~R. Finch.
\newblock {\em {Mathematical Constants}}.
\newblock Encyclopedia of Mathematics and its Applications (Book 94). Cambridge
  University Press, 2003.

\end{thebibliography}

\end{document}